\documentclass[reqno]{amsart}
\usepackage{amssymb}
\usepackage{mathrsfs}
\usepackage{amsmath,amstext,amsfonts,verbatim}
\usepackage[numbers,sort&compress]{natbib}

\usepackage{hyperref} 
 \hypersetup{colorlinks,
  linkcolor=blue,%
  citecolor=blue}

\usepackage[latin1]{inputenc}

\usepackage{amsmath,amsthm,amssymb}

\usepackage{amsfonts}

\usepackage{amsmath,amssymb}
\usepackage{graphicx}
\usepackage{color}
\usepackage{indentfirst}

\setlength{\parskip}{2pt}
\setlength{\textwidth}{15.2cm}
\setlength{\oddsidemargin}{.5cm}
\setlength{\evensidemargin}{0.5cm}
\setlength{\textheight}{23cm}
\setlength{\topmargin}{-1cm}
\setlength{\footskip}{1.5cm}

\newtheorem{theorem}{Theorem}[section]
\newtheorem{lemma}[theorem]{Lemma}

\theoremstyle{definition}
\newtheorem{definition}[theorem]{Definition}

\theoremstyle{remark}
\newtheorem{remark}[theorem]{Remark}

\numberwithin{equation}{section}

\newcommand{\neweq}[1]{\begin{equation}\label{#1}}

\def\phi{\varphi}

\def\incep{\left\{\begin{array}{cl} }
 \def\termin{\end{array}\right. }
\def\2af{2^*_\alpha}

\begin{document}

\title[Quasihyperbolic metric and Gromov hyperbolic spaces I]
{\textbf{Quasihyperbolic metric and Gromov hyperbolic spaces I}}

\author{Hongjun Liu}
\address{Hongjun Liu, School of Mathematical Sciences,
Guizhou Normal University, Guiyang, 550025, China}
\email{hongjunliu@gznu.edu.cn}

\author{Ling Xia}
\address{Ling Xia, School of Mathematical Sciences,
Guizhou Normal University, Guiyang, 550025, China}
\email{18230980941@163.com}

\author{Shasha Yan}
\address{Shasha Yan, School of Mathematical Sciences,
Guizhou Normal University, Guiyang, 550025, China}
\email{3072269500@qq.com}

\keywords{Gromov hyperbolicity; quasihyperbolic metric; quasihyperbolic short arc; length map.}
\thanks{Supported by National Natural Science Foundation of China (Grant No. 12461012) and
the Guizhou Province Science and Technology Foundation (Grant No. QianKeHeJiChu[2020]1Y003.}

\subjclass[2010]{Primary: 30C65, 30F45; Secondary: 30L10.}



\begin{abstract}
In this paper, we introduce the concepts of short arc and length map in quasihyperbolic metric spaces,
and obtain some geometric characterizations of Gromov hyperbolicity for quasihyperbolic metric spaces in terms of
the properties of short arc and length map.

\end{abstract}

\maketitle

\section{Introduction and main results}

The quasihyperbolic metric was introduced by Gehring and his students
Palka and Osgood in the 1970's \cite{GO, GP} in the setting of Euclidean spaces $\mathbb{R}^n (n \geq 2)$.
Since its first appearance, the quasihyperbolic metric
has become an important tool in geometric function theory
and in its generalizations to metric spaces and Banach spaces \cite{BHK, V05-1, Ge62, HL15, V99, KVZ, ZLH21}.
Recently, based on the quasihyperbolic metric, V\"{a}is\"{a}l\"{a}
developed a ``dimension-free" theory of quasiconformal mappings
in infinite-dimensional Banach space and obtained many beautiful results.
More relevant literature can be founded in \cite{HK98, HL15, TV, V90, V91, V92, V98, V99}.

The Gromov hyperbolicity is a concept introduced by Gromov in the setting of geometric group theory in 1980's \cite{Gro}.
Loosely speaking, this property means that a general metric space is ``negatively curved",
in the sense of the coarse geometry. The concept generalizes a fundamental property of the hyperbolic metric,
which has a constant negative Gaussian curvature. Unlike the Gaussian curvature,
which is dependent on the two dimensional surface theory,
the concept of the Gromov hyperbolicity is applicable in a wide range of metric spaces.
Because much of the classical geometric function theory
can be built on the study of the geometry of the hyperbolic metric \cite{Wo, Be},
it is hardly surprising that there are many connections to the generalizations of
the other concepts originating from the classical function theory as well.

Indeed, since its introduction, the theory of the Gromov hyperbolicity has been found numerous applications,
and it has been, for example, considered in the books \cite{BB99, Bon96, BHK, BS00, ZPG22, ZLL22}.
Initially, the research was mainly focused on the hyperbolic group theory \cite{KB02, Bow91}.
Recently, researchers have shown an increasing interest in the study of the Gromov hyperbolicity from different points of view.
For example, geometric characterizations of the Gromov hyperbolicity have been established in \cite{Ba, ZLL21}.
In \cite{BHK}, Bonk et al. investigated negative curvature of uniform metric spaces
and demonstrated many phenomena in function theory from the point of view of Gromov hyperbolicity of the quasihyperbolic metric.
Furthermore, the close connection between the Gromov hyperbolicity and quasiconformal deformations has been studied in \cite{BS00, Ge62, GP, HK98, ZP23}.
The Gromov hyperbolicity of various metrics and surfaces has been investigated in \cite{ZLL21, ZPG22}.
For other discussions in this line see, for example, \cite{BB99, BS00, Kos, Zq, V05-2}.

The main aim of the present paper is to define the Gromov product in the the quasihyperbolic metric,
and then discuss the properties of Gromov hyperbolic on the quasihyperbolic metric spaces with the aid of quasihyperbolic short arcs.
Following analogous notations and terminologies of
\cite{BB99, Bon96, BHK, Ba, BS00, Bow91, Kos, Zq, V05-1, V05-2, ZP23, ZLL21, ZLL22, ZPG22, V05},
now we formally introduce the definitions of Gromov hyperbolicity in the quasihyperbolic metric and some related concepts.

\begin{definition}
Let $(X, k_X)$ be a quasihyperbolic metric space,
fix a base point $\omega\in X$, for any two points $x, y \in X$,
we define
$$
(x|y)_{\omega}=\frac{1}{2}\big(k_X(x,\omega)+k_X(y,\omega)-k_X(x,y)\big).
$$
This number is called the {\it Gromov product} of $x$ and $y$ with respect  to $\omega$.

We say that $(X, k_X)$ is {\it Gromov $\delta$-hyperbolic space},
if there is a constant $\delta\geq 0$ such that
$$
(x|y)_{\omega} \geq \min\{(x|z)_{\omega},(z|y)_{\omega}\}-\delta
$$
for all $x, y, z, \omega \in X$.

If $(X, k_X)$ is Gromov $\delta$-hyperbolic for some $\delta\geq 0$,
then $(X, k_X)$ is called {\it Gromov hyperbolic space}.
\end{definition}

The quasihyperbolic metric $k_X(\cdot, \cdot)$ of $X$ was introduced by Gehring and Osgood in \cite{GO}.
For the concepts of quasihyperbolic metric $k_X(\cdot, \cdot)$, please see Section 2.
It is well known that $(X, k_X)$ is complete, proper, and geodesic as a metric space, see (\cite{BHK}, Proposition 2.8).

\begin{definition}
Let $X$ be a Gromov $\delta$-hyperbolic space in the quasihyperbolic metric for some constant $\delta \geq 0$,
and fix a base point $\omega\in X$.
\begin{enumerate}
\item  $A$ sequence of points $\{x_{i}\}\subseteq X$ is said to a {\it Gromov sequence}
if $(x_{i}|x_{j})_{\omega} \rightarrow \infty$ as $i,j \rightarrow \infty$;
\vspace{0.1cm}
\item Two such sequences $\{x_{i}\}$ and $\{y_{i}\}$ are said to be {\it equivalent}
if $(x_{i}|y_{i})_{\omega} \rightarrow \infty$ as $i \rightarrow \infty$;
\vspace{0.1cm}
\item The {\it Gromov boundary} or {\it the boundary at infinity} $\partial_{\infty}X$ of $X$
is defined to be the set of all equivalent classes of {\it Gromov sequences}, and $X^*=X\cup \partial_{\infty}X$
is called the {\it Gromov closure} of $X$;
\vspace{0.1cm}
\item For $a \in X$ and $\eta \in \partial _{\infty}X$,
the {\it Gromov} product $(a|\eta)_{\omega}$ of $a$ and $\eta$ is defined by
$$
(a|\eta)_{\omega}=\inf \big\{\liminf_{i\rightarrow \infty} (a|b_{i})_{\omega}:\{b_{i}\}\in \eta \big\};
$$

\item For $\xi,\eta \in \partial _{\infty}X$,
the {\it Gromov} product $(\xi|\eta)_{\omega}$ of $\xi$ and $\eta$ is defined by
$$
(\xi|\eta)_{\omega}=\inf \big\{\liminf_{i\rightarrow \infty} (a_{i}|b_{i})_{\omega}: \{a_{i}\}\in \xi
\quad {\text {and}}\quad \{b_{i}\}\in \eta \big\}.
$$
\end{enumerate}
\end{definition}

\begin{remark}\label{G-r}
It is easy to prove that whether a sequence $\{x_{i}\}$ is a Gromov sequence
or not is independent of the subscript $\omega$ of Gromov product $(x_{i}|x_{j})_{\omega}$
and the equivalence of two Gromov sequences is essentially an equivalence relation.
\end{remark}

In 2005, V\"{a}is\"{a}l\"{a} studied the properties of Gromov hyperbolic spaces and obtained a series of novel results.
Under suitable geometric conditions (see Section 2),
in this paper we shall prove a more general result (Theorem \ref{main}) for quasihyperbolic metric spaces.
Our proof is based on a refinement of the method due to V\"{a}is\"{a}l\"{a} \cite{V05}.

\begin{theorem}\label{main}
Let $X$ be a Gromov $\delta$-hyperbolic space in the quasihyperbolic metric,
and let $a, b\in\partial_{\infty}X$, $a\neq b$, $h>0$.
Then there is a sequence $\bar{\alpha}$ of arcs $\alpha_{i}:x_{i} \curvearrowright y_{i}$ in $X$,
where $\{x_{i}\}\in a$ and $\{y_{i}\}\in b$,
which satisfies the following properties:
\begin{enumerate}
\item Each $\alpha_i$ is quasihyperbolic $h$-short arc;
\vspace{0.1cm}
\item For some $s_{1} \in \alpha_{1}$, we have $k_{X}(x_{i},s_{1})\rightarrow\infty$
      and $k_{X}(y_{i},s_{1})\rightarrow\infty$ as $i\rightarrow\infty$;
\vspace{0.1cm}
\item For $i \leq j\leq m$, there is length map $f_{ij}:\alpha_{i}\rightarrow \alpha_{j}$ with $f_{ii}=id$, $ f_{im}=f_{jm}\circ f_{ij}$;
\vspace{0.1cm}
\item $k_{X}(f_{ij}(u),u)\leq 12(\delta+h)$ for all $u \in \alpha_{i}$ and $i\leq j$.
\end{enumerate}
\end{theorem}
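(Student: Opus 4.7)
The plan is to adapt V\"ais\"al\"a's construction in \cite{V05} to the quasihyperbolic setting, building the sequence of arcs and their interconnecting length maps simultaneously.

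First, I would fix representative sequences $\{x_i\} \in a$ and $\{y_i\} \in b$. Since $(X,k_X)$ is a proper geodesic metric space (see Proposition 2.8 of \cite{BHK}), for each $i$ there exists a quasihyperbolic $h$-short arc $\alpha_i : x_i \curvearrowright y_i$, which establishes (1). For (2) I would observe that any Gromov sequence must satisfy $k_X(x_i,\omega)\to\infty$, because
$$
(x_i|x_j)_\omega \leq \min\{k_X(x_i,\omega),k_X(x_j,\omega)\};
$$
hence for any fixed $s_1 \in \alpha_1$ the triangle inequality gives $k_X(x_i,s_1) \geq k_X(x_i,\omega) - k_X(\omega,s_1) \to \infty$, and likewise for $\{y_i\}$. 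This part is essentially automatic from the definition of Gromov sequence.

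The heart of the proof is the construction of $f_{ij}$ with the bound (4) and the cocycle identity in (3). The geometric input is Gromov hyperbolicity in the form of a slim-triangle/Morse-stability statement for short arcs: two $h$-short arcs in a $\delta$-hyperbolic space whose endpoints are pairwise sufficiently Gromov-close lie in uniform quasihyperbolic neighborhoods of one another, with a constant of the form $C(\delta+h)$. I would combine this with the $h$-short property applied separately to $\alpha_i$ and $\alpha_j$ in order to estimate $k_X(u,v)$ whenever $u\in \alpha_i$ and $v\in \alpha_j$ share a common arc-length parameter. Tracking how each appeal to the $\delta$-hyperbolicity inequality and to the $h$-short excess contributes to the total, I expect the tight sum of these contributions to be exactly $12(\delta+h)$.

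To realise the cocycle condition $f_{im}=f_{jm}\circ f_{ij}$ I would parameterize every $\alpha_i$ by arc length from a commonly chosen base point, e.g.\ a point on $\alpha_i$ approximately realising the Gromov product $(x_i|y_i)_{\omega}$, or more intrinsically the ``projection'' of $s_1$ onto $\alpha_i$ inherited from the slim triangle configuration, and then define $f_{ij}$ by matching the resulting parameters. With this uniform parameterization the identity $f_{ii}=\mathrm{id}$ is immediate and the composition law $f_{im}=f_{jm}\circ f_{ij}$ is automatic.

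The main obstacle is the simultaneous control required for (3) and (4): the length map must be defined on the \emph{entire} arc $\alpha_i$, not merely on a central subinterval where the two arcs genuinely fellow-travel, and the bound $k_X(f_{ij}(u),u)\leq 12(\delta+h)$ must remain valid even as $u$ approaches an endpoint where the two arcs diverge toward distinct points $x_i\neq x_j$. Handling these endpoint regions will require passing to a subsequence of the $\alpha_i$ for which $(x_i|x_j)_\omega$ and $(y_i|y_j)_\omega$ are large enough that the divergence windows lie inside the allowed tolerance, and it is in this balancing between the Gromov products at the endpoints, the $h$-short excess, and the $\delta$-hyperbolicity constant that the explicit value $12(\delta+h)$ should crystallise.
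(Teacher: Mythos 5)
Your handling of (1) and (2) is fine --- indeed your argument for (2), via $(x_i|x_j)_\omega\le\min\{k_X(x_i,\omega),k_X(x_j,\omega)\}$, is cleaner than the paper's. But for (3) and (4) there is a genuine gap, and it is located exactly where you flag the ``main obstacle'': you never resolve it, and your proposed resolution (keep the fixed representatives $\{x_i\}\in a$, $\{y_i\}\in b$ and pass to a subsequence where the Gromov products $(x_i|x_j)_\omega$ are large) cannot work. For an arbitrary representative of $a$ the points $x_i$ may stray arbitrarily far from the geodesic rays toward $a$ --- e.g.\ in the hyperbolic plane take $x_i$ at distance $i$ from the point of depth $i$ on a ray representing $a$, so that $(x_i|x_j)_\omega\approx\min\{i,j\}\to\infty$ but $k_X(\omega,x_i)-(x_i|x_j)_\omega\approx i$ is unbounded along \emph{every} subsequence. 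Then the terminal segments of $\alpha_i$ and $\alpha_j$ diverge over a window of unbounded length, and no length map satisfying $k_X(f_{ij}(u),u)\le 12(\delta+h)$ exists. The theorem only claims existence of \emph{some} representatives, and the missing idea is that you must construct them: the paper's Lemma 3.1 replaces $\{u_i\}\in a$ by points $x_m$ sitting on $h$-short arcs $z\curvearrowright u_{N(m)}$ at quasihyperbolic arc length exactly $m$, which forces $(x_m|u_{N(m)})_z\ge m-\tfrac{3h}{2}$ (so $x_m$ is within $O(\delta+h)$ of every later radial arc) and yields length maps between the radial arcs $\beta_i:z\curvearrowright x_i$ moving each point by at most $4\delta+2h$. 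Without some such re-selection of the boundary sequences, the fellow-traveling you invoke simply is not available near the endpoints.

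Second, the quantitative content of (4) is asserted rather than proved: you say the constant $12(\delta+h)$ ``should crystallise,'' but you give no mechanism producing it. In the paper it arises from a tripod subdivision $\alpha_i=\alpha_i'\cup\alpha_i^{\ast}\cup\alpha_i''$ at the points realising $(z|y_i)_{x_i}$ and $(z|x_i)_{y_i}$ (the middle piece having length $\le h$ by $h$-shortness), followed by a chain of three displacement estimates of size $4\delta+2h$ each --- arc to radial side via a bijective length map $\varphi_i:\alpha_i''\to\gamma_i''$, radial side to radial side via the Lemma 3.1 map $g_{ij}$, and back --- plus $O(h)$ bookkeeping controlled by arranging $|(x_i|y_i)_z-(x_j|y_j)_z|\le h$ (which uses $a\ne b$ to get boundedness of $(x_i|y_i)_z$) and $l_{k_X}(\beta_{i+1})\ge l_{k_X}(\beta_i)+3h$. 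The latter inequalities are also what make $f_{ij}$ well defined on all of $\alpha_i$ (each half of $\alpha_i$ measured from the subdivision point is shorter than the corresponding half of $\alpha_j$), another point you identify but leave open. In short, your outline names the right difficulties and the right base-point normalization for the cocycle identity, but the two load-bearing steps --- constructing good representatives and deriving the explicit displacement bound --- are absent.
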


\begin{remark}
For the concepts of length map and quasihyperbolic $h$-short arc, please see Section 2.
\end{remark}

\vspace{0.1cm}

The rest of this paper is organized as follows.
In Section 2, we will introduce some necessary notations and concepts,
recall some known results, and prove a series of basic and useful results.
We mainly demonstrate in Section 3 that some geometric characterizations of
Gromov hyperbolicity for quasihyperbolic metric spaces.

\vspace{0.1cm}


\section{Preliminaries}

In this section, we give the necessary definitions and auxiliary results, which will be used in the proofs
of our main results.

\vspace{0.1cm}

\medskip\noindent{\bf{2.1. Quasihyperbolic metric}}

In this subsection we review some basic definitions and facts about quasihyperbolic metric,
see \cite{GO, KVZ, LLYT,ZP23, HL15, V90, V99, ZLH21}, and references therein for more details.

Let $X$ be a metric space.
By a {\it curve} we mean any continuous mapping $\gamma : [a,b]\rightarrow X$.
The {\it length} of $\gamma$ is defined by
$$
l(\gamma)=\sup\left\{
\sum_{i=1}^{n}\big|\gamma(t_i)-\gamma(t_{i-1})\big|\right\},
$$
where the supremum is taken over all partitions $a=t_0< t_1< \cdots < t_n=b$ of the interval $[a,b]$.
If $l(\gamma)<\infty$, then the curve $\gamma$ is said to be a {\it rectifiable curve}.

Let $\gamma\subset X$ be a rectifiable curve of length $l(\gamma)$ with endpoints $a$ and $b$.
The {\it {length function}} associated with a rectifiable curve
$\gamma : [a, b]\rightarrow X$ is $s_{\gamma}: [a, b] \rightarrow [0, l(\gamma)]$,
given by $s_{\gamma}(t)=l \big(\gamma|_{[a, t]}\big)$. For any rectifiable curve $\gamma : [a, b]\rightarrow X$,
there exists a unique curve $\gamma_s : [0, l(\gamma)]\rightarrow X$
such that $\gamma=\gamma_s\circ s_{\gamma}$. The curve $\gamma_s$ is called
the {\it arc length parametrization} of $\gamma$. This parametrization is characterized by the relation
$l \big(\gamma_s|_{[t_1,t_2]}\big)=t_2-t_1$ for all $0\leq t_1<t_2\leq l(\gamma)$.

If $\gamma$ is a rectifiable curve in $X$, the line integral over
$\gamma$ of each nonnegative Borel function $\varrho : X\rightarrow [0, \infty]$ is given by
$$
\int_{\gamma}\varrho\,
ds=\int_{0}^{l(\gamma)}\varrho\circ\gamma_{s}(t)\, dt.
$$

Let $\gamma$ be a rectifiable curve in metric space $X$.
The {\it quasihyperbolic length} of $\gamma$ in $X$ is the number
$$
l_{k_{X}}(\gamma)=\int_{\gamma}\frac{ds}{\delta_{X}(x)},
$$
for each $x\in X$, where $\delta_{X}(x)$ denote the distance ${\text {dist}}(x, \partial X)$.
For each pair of points $x,y\in X$, the {\it quasihyperbolic distance}
$k_X(x,y)$ between $x$ and $y$ is defined by
$$
k_{X}(x, y)=\inf_{\gamma}\, l_{k_{X}}(\gamma),
$$
where $\gamma$ runs over all rectifiable curves in $X$ joining $x$ and $y$.
If there is no rectifiable curve in $X$ joining $x$ and $y$,
then we define $k_{X}(x, y)=+\infty$.

A metric space $X$ is said to be {\it rectifiably connected} if for any two points $x, y\in X$,
there exists a rectifiable curve in $X$ joining $x$ and $y$.
If $X$ is a rectifiably connected open set,
it is clear that $k_X(x, y)<\infty$ for any two points $x, y\in X$.
Thus it is easy to verify that $k_X(\cdot,\cdot)$ is a metric in $X$,
and we called the {\it {quasihyperbolic metric}} of $X$,
then $(X, k_X)$ is a quasihyperbolic metric space.
Throughout the article, we let $X$ denote a quasihyperbolic metric space.

\vspace{0.1cm}

\medskip\noindent{\bf{2.2. Quasihyperbolic short arc and length map}}

By a space we mean a quasihyperbolic metric space.
The distance between points $x$ and $y$ is usually written as $k_{X}(x,y)$.
An arc in a metric space $X$ is a subset homeomorphic to a real interval.
Unless otherwise stated, this interval is assumed to be closed. Then the arc
is compact and has two endpoints. We write $\alpha : x \curvearrowright y$ if $\alpha$
is an arc with endpoints $x$ and $y$.
For an arc $\alpha$ , we let $\alpha|_{[u, v]}$ denote the closed subarc of $\alpha$ between points $u, v \in \alpha$.

\begin{definition}
Let $h \geq 0$. We say that an arc $\gamma: x \curvearrowright y$  is {\it quasihyperbolic $h$-short arc} in $X$, if
$$
l_{k_{X}}(\gamma)\leq k_X(x,y)+h.
$$
\end{definition}

\begin{remark}\label{re-2}
For all $x,y\in X$, from the definition of $k_{X}(x, y)$, it follows that
$$
l_{k_X}(\gamma)\leq k_{X}(x,y)+\varepsilon
$$
for all $\varepsilon>0$.
That is, for each pair $x,y\in X$ and for each $h>0$, there is a quasihyperbolic $h$-short arc
$\alpha: x\curvearrowright y$.
\end{remark}

\begin{definition}
Let $X$ be a quasihyperbolic metric space.
Suppose that $\alpha$, $\beta$ are rectifiable arcs in $X$ with $l_{k_X}(\alpha)\leq l_{k_X}(\beta)$.
A map $f:\alpha \rightarrow \beta$  is said to be {\it length map} if
$$
l_{k_X}\big(\beta|_{[f(u),f(v)]}\big)=l_{k_X}\big(\alpha|_{[u,v]}\big)
$$
for all $u,v\in \alpha$.
\end{definition}

\begin{theorem}\label{qc-1}
Let $X$ be a quasihyperbolic metric space.
Suppose that $\gamma:x\curvearrowright y$ is quasihyperbolic $h$-short arc in $X$.
Then
\begin{equation}\label{a}
\begin{split}
k_{X}(x, z)-\frac{h}{2}\leq (z|y)_{x}\leq k_{X}(x, z)
\end{split}
\end{equation}
for all $z\in \gamma$.
\end{theorem}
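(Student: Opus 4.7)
The plan is to unfold the definition of the Gromov product $(z|y)_x = \tfrac{1}{2}\bigl(k_X(x,z)+k_X(x,y)-k_X(z,y)\bigr)$ and then control $k_X(x,y)-k_X(z,y)$ from above and below in terms of $k_X(x,z)$.

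For the upper bound I would simply invoke the triangle inequality in $(X,k_X)$: we have $k_X(x,y)\leq k_X(x,z)+k_X(z,y)$, so $k_X(x,y)-k_X(z,y)\leq k_X(x,z)$. Plugging into the Gromov product gives $(z|y)_x \leq \tfrac{1}{2}\bigl(k_X(x,z)+k_X(x,z)\bigr)=k_X(x,z)$, which is the desired right-hand inequality.

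For the lower bound the key is to use that $z$ lies on the $h$-short arc $\gamma$. Since $z\in\gamma$, we may split $\gamma=\gamma_1\cup\gamma_2$ into the subarcs $\gamma_1=\gamma|_{[x,z]}:x\curvearrowright z$ and $\gamma_2=\gamma|_{[z,y]}:z\curvearrowright y$, so that $l_{k_X}(\gamma)=l_{k_X}(\gamma_1)+l_{k_X}(\gamma_2)$. Since $l_{k_X}(\gamma_1)\geq k_X(x,z)$ and $l_{k_X}(\gamma_2)\geq k_X(z,y)$, and $\gamma$ is $h$-short, we obtain
\begin{equation*}
k_X(x,z)+k_X(z,y)\;\leq\;l_{k_X}(\gamma)\;\leq\;k_X(x,y)+h.
\end{equation*}
Rearranging yields $k_X(x,y)-k_X(z,y)\geq k_X(x,z)-h$. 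Feeding this back into the Gromov product gives $(z|y)_x \geq \tfrac{1}{2}\bigl(k_X(x,z)+k_X(x,z)-h\bigr)=k_X(x,z)-\tfrac{h}{2}$, as claimed.

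I do not expect any real obstacle here: the statement is essentially a direct consequence of the definitions, with the triangle inequality on one side and the subadditivity of arc length together with the $h$-short hypothesis on the other. The only point that deserves a careful word is the observation that any point $z$ on $\gamma$ decomposes $\gamma$ into two subarcs whose quasihyperbolic lengths add to $l_{k_X}(\gamma)$, which follows from the arc-length parametrization discussed in Section 2.1.
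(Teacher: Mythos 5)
Your proposal is correct and follows essentially the same route as the paper: the upper bound via the triangle inequality $k_X(x,y)\leq k_X(x,z)+k_X(z,y)$, and the lower bound by splitting $\gamma$ at $z$ and combining $k_X(x,z)+k_X(z,y)\leq l_{k_X}(\gamma)\leq k_X(x,y)+h$ with the definition of the Gromov product. No gaps.
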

\begin{proof}
Let $h\geq 0$. For all $z\in \gamma$,
according to the definitions of quasihyperbolic metric and quasihyperbolic $h$-short arc,
it follows that
\begin{equation*}\label{2.1}
\begin{split}
k_{X}(x,z)+k_{X}(z,y) &\leq l_{k_{X}}\big(\gamma|_{[x,z]}\big)+l_{k_{X}}\big(\gamma|_{[z,y]}\big)\\
&=l_{k_{X}}(\gamma)\leq k_{X}(x, y)+h.
\end{split}
\end{equation*}
Therefore,
$$
2k_{X}(x,z)-h\leq k_{X}(x,z)+k_{X}(x,y)-k_{X}(z, y)=2(z|y)_{x}.
$$
Hence, we have
$$
(z|y)_{x}\geq k_{X}(x,z)-\frac{h}{2}.
$$
Now, we prove the right inequality of the inequality (\ref{a}).
By the definition of Gromov product, we obtain that
\begin{equation*}
\begin{split}
2(z|y)_{x} &= k_{X}(x,z)+k_{X}(x,y)-k_{X}(y,z)\\
& \leq k_{X}(x,z)+k_{X}(x,y)-\big(k_{X}(x,y)-k_{X}(x,z)\big)\\
&=2k_{X}(x,z),
\end{split}
\end{equation*}
that is, $(z|y)_{x}\leq k_{X}(x, z).$
Hence, This proof is completed.
\end{proof}

\begin{theorem}\label{qc-2}
Let $X$ be a quasihyperbolic metric space.
Suppose that $\gamma: p\curvearrowright q$ is a quasihyperbolic $h$-short arc in $X$,
and that $y_{1}, y_{2}\in \gamma$. Then the subarc $\gamma|_{[y_{1}, y_{2}]}$ is quasihyperbolic $h$-short arc.
\end{theorem}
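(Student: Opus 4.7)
The plan is to prove Theorem \ref{qc-2} by a direct additivity-plus-triangle-inequality argument; no deep machinery is needed, since the statement is essentially that the $h$-short property is inherited by subarcs of a suitably parametrized arc.

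First I would fix the orientation on $\gamma$ so that, without loss of generality, $y_1$ precedes $y_2$ on $\gamma$ (the other case is symmetric). Then the arc decomposes into three consecutive subarcs $\gamma|_{[p,y_1]}$, $\gamma|_{[y_1,y_2]}$, and $\gamma|_{[y_2,q]}$, whose quasihyperbolic lengths add up to $l_{k_X}(\gamma)$ by the additivity of the line integral defining $l_{k_X}$.

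Next I would exploit two elementary facts. From the $h$-short hypothesis we have
\begin{equation*}
l_{k_X}\bigl(\gamma|_{[p,y_1]}\bigr) + l_{k_X}\bigl(\gamma|_{[y_1,y_2]}\bigr) + l_{k_X}\bigl(\gamma|_{[y_2,q]}\bigr) = l_{k_X}(\gamma) \leq k_X(p,q) + h.
\end{equation*}
On the other hand, the triangle inequality in $(X,k_X)$ together with the bound $k_X(u,v) \leq l_{k_X}(\gamma|_{[u,v]})$ on each of the outer subarcs gives
\begin{equation*}
k_X(p,q) \leq k_X(p,y_1) + k_X(y_1,y_2) + k_X(y_2,q) \leq l_{k_X}\bigl(\gamma|_{[p,y_1]}\bigr) + k_X(y_1,y_2) + l_{k_X}\bigl(\gamma|_{[y_2,q]}\bigr).
\end{equation*}
Substituting this upper bound for $k_X(p,q)$ into the previous inequality and cancelling the lengths of the two outer subarcs immediately yields $l_{k_X}(\gamma|_{[y_1,y_2]}) \leq k_X(y_1,y_2) + h$, which is precisely the $h$-short property of the subarc.

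There is no real obstacle here; the only subtlety is the initial reduction assuming $y_1$ precedes $y_2$ along $\gamma$, which is harmless because the roles of $y_1$ and $y_2$ in the conclusion are symmetric. The whole argument is a three-line manipulation once the additivity of $l_{k_X}$ over consecutive subarcs is invoked.
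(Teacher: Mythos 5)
Your proposal is correct and follows essentially the same route as the paper's proof: additivity of $l_{k_X}$ over the three consecutive subarcs, the bound $k_X(u,v)\leq l_{k_X}(\gamma|_{[u,v]})$ applied to the two outer pieces, and the triangle inequality $k_X(p,q)\leq k_X(p,y_1)+k_X(y_1,y_2)+k_X(y_2,q)$. The only difference is the order in which you substitute the inequalities, which is immaterial.
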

\begin{proof}
Let $h\geq 0$ and $y_{1}, y_{2}\in \gamma$.
From the definition of quasihyperbolic $h$-short arc, we have
$$
l_{k_{X}}(\gamma)\leq k_{X}(p, q)+h.
$$
From the above inequality,
it follows immediately from the definition of quasihyperbolic distance that
\begin{equation*}
\begin{split}
l_{k_{X}}\big(\gamma|_{[y_{1},y_{2}]}\big)&=l_{k_{X}}(\gamma)-l_{k_{X}}\big(\gamma|_{[p,y_{1}]}\big)-l_{k_{X}}\big(\gamma|_{[y_{2},q]}\big)\\
&\leq l_{k_{X}}(\gamma)-k_{X}(p, y_{1})-k_{X}(y_{2},q)\\
&\leq k_{X}(p, q)+h -k_{X}(p, y_{1})-k_{X}(y_{2}, q).
\end{split}
\end{equation*}
Since $k_{X}(p, q)-k_{X}(p, y_{1})-k_{X}(y_{2}, q)\leq k_{X}(y_{1},y_{2})$,
we have
$$
l_{k_{X}}\big(\gamma|_{[y_{1}, y_{2}]}\big)\leq k_{X}(y_{1}, y_{2})+h.
$$
Hence, the subarc $\gamma|_{[y_{1}, y_{2}]}$ is quasihyperbolic $h$-short arc.
\end{proof}

\vspace{0.1cm}

\section{Proof of Theorem \ref{main}}

In what follows, we always assume that $X$ is a Gromov $\delta$-hyperbolic space
in the quasihyperbolic metric, $a\in\partial_{\infty} X$, $h>0$.
To prove Theorem \ref{main},
we divide this section into two subsections. In the first subsection,
a useful lemma will be proved, and the proof of Theorem \ref{main}
will be presented in the second subsection.

\medskip\noindent{\bf{3.1. An auxiliary result.}}
\vspace{0.1cm}

The following result plays a key role in the proof of Theorem \ref{main}.
Based on \cite{V05-2}, for the completeness, we will prove the following result.

\begin{lemma}\label{le-1}
Let $X$ be a Gromov $\delta$-hyperbolic space in the quasihyperbolic metric,
and let $x\in X$, $a\in\partial_{\infty} X$, $h>0$.
Then there is a sequence $\bar{\beta}$ of arcs $\beta_{m}: x \curvearrowright x_{m}$ and $\{x_{m}\}\in a$,
which satisfies the following properties:
\begin{enumerate}
\item Each $\beta_{m}$ is quasihyperbolic $h$-short;
\vspace{0.1cm}

\item  The sequence of quasihyperbolic lengths $l_{k_{X}}(\beta_{m})$ is increasing and tends to $\infty$;
\vspace{0.1cm}

\item  For $m \leq n$, the length map $f_{mn}:\beta_{m}\rightarrow \beta_{n}$ with $f_{mn}(x)=x$ satisfies
$$
k_{X}\big(f_{mn}(z),z\big)\leq 4\delta+2h   \quad {\text {for all}}\quad  z \in \beta_{m}.
$$
\end{enumerate}
\end{lemma}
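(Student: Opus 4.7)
The plan is to extract the required family from an arbitrary Gromov sequence $\{a_i\}\in a$ by a diagonal selection that forces the Gromov products $(x_m|x_n)_x$ to dominate the relevant arc lengths, and then to verify (3) by iterating the $\delta$-hyperbolicity condition along a four-point chain. For each $i$ I would first apply Remark \ref{re-2} to pick a quasihyperbolic $h$-short arc $\alpha_i:x\curvearrowright a_i$. The elementary bound $(a_i|a_j)_x\le\min\{k_X(x,a_i),k_X(x,a_j)\}$ combined with $(a_i|a_j)_x\to\infty$ forces $k_X(x,a_i)\to\infty$, and hence $L_i:=l_{k_X}(\alpha_i)\ge k_X(x,a_i)\to\infty$. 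I would then inductively choose indices $i_1<i_2<\cdots$ so that $L_m:=l_{k_X}(\alpha_{i_m})$ is strictly increasing and, for every $k<m$, $(a_{i_k}|a_{i_m})_x\ge L_k$; since $L_k$ is already fixed by the time $i_m$ is chosen, and since both $L_j\to\infty$ and $(a_{i_k}|a_j)_x\to\infty$ as $j\to\infty$ for each fixed $k$, these finitely many conditions are simultaneously satisfied by any sufficiently large $j$. Set $\beta_m:=\alpha_{i_m}$ and $x_m:=a_{i_m}$; this gives (1) and (2).

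For the maps, parametrize each $\beta_m$ by quasihyperbolic arc length, $s_m:[0,L_m]\to\beta_m$ with $s_m(0)=x$, and define $f_{mn}(s_m(t)):=s_n(t)$ for $m\le n$. Since $L_m\le L_n$, this is well-defined, fixes $x$, satisfies $f_{mm}=id$ and $f_{kn}\circ f_{mk}=f_{mn}$, and the length-preservation identity required of a length map is immediate from the arc-length parametrization.

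To prove (3), fix $z\in\beta_m$ at arc-length parameter $t$ and set $z':=f_{mn}(z)$, so $z'$ has arc-length parameter $t$ on $\beta_n$. Applying Theorem \ref{qc-2} to the subarcs $\beta_m|_{[x,z]}$ and $\beta_n|_{[x,z']}$ gives
$$
t-h\le k_X(x,z)\le t,\qquad t-h\le k_X(x,z')\le t,
$$
so in particular $|k_X(x,z)-k_X(x,z')|\le h$. Theorem \ref{qc-1} applied to $\beta_m$ and $\beta_n$ yields $(z|x_m)_x\ge k_X(x,z)-h/2$ and $(z|x_m)_x\le k_X(x,z)$, and analogously for $(z'|x_n)_x$. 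Iterating the $\delta$-hyperbolicity condition along the chain $z,x_m,x_n,z'$ produces
$$
(z|z')_x\ge\min\{(z|x_m)_x,(x_m|x_n)_x,(x_n|z')_x\}-2\delta.
$$
By the subsequence selection, $(x_m|x_n)_x\ge L_m\ge t\ge(z|x_m)_x$, and analogously $(x_m|x_n)_x\ge(z'|x_n)_x$, so the middle term does not realise the minimum. Hence $(z|z')_x\ge\min\{k_X(x,z),k_X(x,z')\}-h/2-2\delta$, and substituting into the identity $k_X(z,z')=k_X(x,z)+k_X(x,z')-2(z|z')_x$ together with $|k_X(x,z)-k_X(x,z')|\le h$ produces $k_X(z,z')\le 2h+4\delta$, as required.

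The main obstacle is the diagonal selection: one must pin down $L_m$ (which is determined the moment $i_m$ is chosen) before imposing, on all later indices $n>m$, the condition $(a_{i_m}|a_{i_n})_x\ge L_m$. This ordering is what makes the induction work, and it is precisely what guarantees that the middle Gromov product in the four-point chain never realises the minimum; the rest is routine bookkeeping with Theorems \ref{qc-1} and \ref{qc-2} and the standard two-step iteration of the Gromov condition.
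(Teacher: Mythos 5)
The selection step at the heart of your construction cannot be carried out, and you have in fact already written down the inequality that kills it. You require $(a_{i_k}|a_{i_m})_x\ge L_k$ for $m>k$, where $L_k=l_{k_X}(\alpha_{i_k})\ge k_X(x,a_{i_k})$; but the elementary bound you quote one sentence earlier gives $(a_{i_k}|a_{i_m})_x\le \min\{k_X(x,a_{i_k}),k_X(x,a_{i_m})\}\le L_k$, so your condition forces equality throughout and is achievable only in degenerate cases (the arc a geodesic and $a_{i_k}$ metrically between $x$ and $a_{i_m}$). The justification you offer --- that $(a_{i_k}|a_j)_x\to\infty$ as $j\to\infty$ for each fixed $k$ --- is false: for fixed $i_k$ this quantity is bounded above by the constant $k_X(x,a_{i_k})$; the Gromov-sequence condition only gives divergence when \emph{both} indices tend to infinity. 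The failure is not cosmetic: without the domination $(x_m|x_n)_x\ge k_X(x,z)$, a point $z$ near the far endpoint of $\beta_m$ makes the middle term of your four-point chain realise the minimum, and the resulting bound on $k_X(z,z')$ degenerates to roughly twice the defect $k_X(x,x_m)-(x_m|x_n)_x$, which is not controlled by $\delta$ and $h$.

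The paper's remedy is to truncate. After choosing $N(m)$ so that $(u_i|u_j)_x\ge m$ for all $i,j\ge N(m)$, it takes $\beta_m$ to be the initial subarc of the $h$-short arc $\alpha_{N(m)}:x\curvearrowright u_{N(m)}$ of quasihyperbolic length exactly $m$, with a \emph{new} endpoint $x_m$. Then every $z\in\beta_m$ satisfies $k_X(x,z)\le m\le (u_{N(m)}|u_{N(n)})_x$, and the four-point chain is run through the original far endpoints $u_{N(m)},u_{N(n)}$ rather than through $x_m,x_n$; this is precisely what renders the middle Gromov product harmless. The truncation carries a price you would also have to pay: the new endpoints are no longer members of the original sequence, so one must separately verify that $\{x_m\}$ is a Gromov sequence equivalent to $\{u_i\}$, hence still in the class $a$ (the paper's Claim 3.1, proved via Theorem \ref{qc-1} and a two-step application of the hyperbolicity inequality). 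Your remaining computations --- the comparison $|k_X(x,z)-k_X(x,z')|\le h$ between equal-arc-length points and the final assembly yielding $4\delta+2h$ --- are sound once the domination is in place, and are essentially the paper's, which instead compares $z$ to the point $z'$ at equal quasihyperbolic \emph{distance} and then bounds $k_X(z',f_{mn}(z))\le h$ separately.
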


\begin{proof}
The assumption implies that $X$ is a Gromov $\delta$-hyperbolic space in the quasihyperbolic metric,
and that $x\in X$, $a\in\partial_{\infty} X$, $h>0$. Let $\{u_{i}\}\in a$,
it follows from the definition of Gromov boundary that $\{u_{i}\}$ is a Gromov sequence.
According to Remark \ref{re-2}, for any two points in $X$, there is a quasihyperbolic $h$-short arc connecting them,
then we let $\alpha_{N}:x \curvearrowright u_{N} $ be quasihyperbolic $h$-short arc, where $N\in \mathbb{N}_{+}$.
Since $\{u_{i}\}$ is a Gromov sequence in $X$,
we can get $(u_{i}|u_{j})_{x}\rightarrow \infty$ as $i,j\rightarrow \infty$,
that is, for any $m >0$, there exists $N(m)\in \mathbb{N}_{+}$ such that whenever $i,j\geq N(m)$, then
\begin{equation}\label{le-1.1}
\begin{split}
(u_{i}|u_{j})_{x} \geq m
\end{split}
\end{equation}
and such that
$$N(1)< N(2)< N(3)<\cdots$$
By inequality (\ref{le-1.1}), it follows that
$$
l_{k_{X}}\big(\alpha_{N(m)}\big)\geq k_{X}\big(x,u_{N(m)}\big)=\big(u_{N(m)}|u_{N(m)}\big)_{x}\geq m.
$$
Therefore, there exists a subarc $\beta_{m}=\alpha_{N(m)}|_{[x,x_{m}]}$ such that
\begin{equation}\label{le-1.2}
\begin{split}
l_{k_{X}}(\beta_m)=m,
\end{split}
\end{equation}
where $x_{m}\in \alpha_{N(m)}$.

Hence, (1) follows from Theorem \ref{qc-2}.
Part (2) follows directly from inequality (\ref{le-1.2}).

In what follows, we only need to prove $\{x_{m}\}\in a$ and (3).

\vspace{0.1cm}
\noindent {\bf {Claim 3.1.}}\,\, $\{x_{m}\}$ is a Gromov sequence.
\vspace{0.1cm}

In order to prove $\{x_{m}\}$ is a Gromov sequence,
it suffices to prove $(x_{m}|x_{n})_{x}\rightarrow\infty$ as $m,n\rightarrow\infty$.

Applying the conclusion (1) of Lemma \ref{le-1}, we know that $\beta_{m}$ is quasihyperbolic $h$-short arc.
In view of the fact $x_{m}\in \alpha_{N(m)}$ and $\alpha_{N(m)}:x \curvearrowright u_{N(m)} $ is a quasihyperbolic $h$-short arc,
by Theorem \ref{qc-1} and inequality (\ref{le-1.2}), we have
\begin{eqnarray*}
\begin{split}
\big(x_{m}|u_{N(m)}\big)_{x}&\geq k_{X}(x,x_{m})-\frac{h}{2}\\&\geq  l_{k_{X}}(\beta_{m})-h-\frac{h}{2}\\&=m-\frac{3h}{2},
\end{split}
\end{eqnarray*}
which implies that
\begin{equation}\label{le-1.3}
\begin{split}
\big(x_{m}|u_{N(m)}\big)_{x}\rightarrow\infty   \quad {\text {as}}\quad  m\rightarrow\infty.
\end{split}
\end{equation}

In addition, by $N(m)\in \mathbb{N}_{+}$ and $N(m)< N(m+1)<\cdots$,
we get that
\begin{equation}\label{le-1.4}
\begin{split}
N(m)\rightarrow\infty  \quad {\text {as}}\quad m\rightarrow\infty.
\end{split}
\end{equation}
Since $\{u_{N(m)}\}$ is a subsequence of Gromov sequence $\{u_{i}\}$,
it is obviously a Gromov sequence equivalent to $\{u_{i}\}$.
Thus, by (\ref{le-1.4}), we have
\begin{equation}\label{le-1.4-1}
\begin{split}
\big(u_{N(m)}|u_{N(n)}\big)_{x}\rightarrow\infty    \quad {\text {as}}\quad  m, n\rightarrow\infty
\end{split}
\end{equation}
By the definition of the Gromov hyperbolic space, it is clear that
$$
(x_{m}|x_{n})_{x}\geq \min\left\{\big(x_{m}|u_{N(m)}\big)_{x},\big(u_{N(m)}|u_{N(n)}\big)_{x},\big(u_{N(n)}|x_{n}\big)_{x}\right\}-2\delta.
$$
Therefore, combining this inequality with (\ref{le-1.3}) and (\ref{le-1.4-1}), it follows immediately that
$$
(x_{m}|x_{n})_{x}\rightarrow\infty \quad {\text {as}}\quad  m, n\rightarrow\infty.
$$
Hence, $\{x_{m}\}$ is a Gromov sequence. Claim 3.1 is proved.

Moreover, we know from (\ref{le-1.3}) that
$\{x_{m}\}$ is equivalent to $\{u_{N(m)}\}$.
Since $\{u_{N(m)}\}$ is equivalent to $\{u_{i}\}$, where $\{u_{i}\}\in a\in\partial_{\infty} X$,
and since the equivalence of two Gromov sequences is essentially an equivalence relation,
from the properties of equivalence relation and the definition of $\partial_{\infty} X$,
we obtain $\{x_{m}\}\in a$.

At last, we need to show (3). Since the sequence of lengths $l_{k_{X}}(\beta_{m})$ is increasing,
let $m \leq n$, we can set $f_{mn}:\beta_{m} \rightarrow \beta_{n}$ be the length map fixing $x$,
that is $f_{mn}(x)=x$.
In what follows, for all $z \in \beta_{m}$, we want to find an estimate
\begin{equation*}\label{le-1.5}
\begin{split}
k_{X}\big(f_{mn}(z),z\big)\leq 4\delta+2h.
\end{split}
\end{equation*}

Since $\alpha_{N(m)}:x \curvearrowright u_{N(m)}$ and $\alpha_{N(n)}:x \curvearrowright u_{N(n)}$
are quasihyperbolic $h$-short arcs in Gromov hyperbolic space, for $z \in \beta_{m}\subset \alpha_{N(m)}$, by (\ref{le-1}) and (\ref{le-1.2}), we have
\begin{equation}\label{le-1.13}
\begin{split}
k_{X}(x,z)\leq l_{k_{X}}\big(\beta_{m}|_{[x, z]}\big)\leq l_{k_{X}}(\beta_{m})=m\leq \big(u_{N(m)}|u_{N(n)}\big)_{x}.
\end{split}
\end{equation}
And since $f_{mn}:\beta_{m} \rightarrow \beta_{n}$  is the length map fixing $x$, it follows that
$$
l_{k_{X}}\big(\beta_{m} |_{[x, z]}\big)=l_{k_{X}}\big(\beta_{n}|_{[x, f_{mn}(z)]}\big).
$$
Therefore,
\begin{equation*}\label{le-1.14}
\begin{split}
l_{k_{X}}\big(\alpha_{N(m)}|_{[x, z]}\big)=l_{k_{X}}\big(\alpha_{N(n)}|_{[x, f_{mn}(z)]}\big).
\end{split}
\end{equation*}

Let $z^{\prime}\in \alpha_{N(n)}$ be a point with
$$
k_{X}(x,z^{\prime})=k_{X}(x,z).
$$
By Theorem \ref{qc-1}, we know that
\begin{equation}\label{le-1.8}
\begin{split}
 (z|u_{N(m)})_{x}\geq k_{X}(x,z)-\frac{h}{2} \quad {\text {and}}\quad
 (z^{\prime}|u_{N(n)})_{x}\geq k_{X}(x,z^{\prime})-\frac{h}{2}.
\end{split}
\end{equation}
From (\ref{le-1.13}), (\ref{le-1.8}) and the definition of Gromov hyperbolic space, we get that
 \begin{eqnarray*}
\begin{split}
k_{X}(x,z)-\frac{k_{X}(z,z^{\prime})}{2}&=(z|z^{\prime})_{x}
\geq \min \big\{(z|u_{N(m)})_{x},\,\, (u_{N(m)}|u_{N(n)})_{x},\,\, (u_{N(n)}|z^{\prime})_{x}\big\}-2\delta\\
&\geq \min \left\{ k_{X}(x,z)-\frac{h}{2},\,\, k_{X}(x,z),\,\, k_{X}(x,z^{\prime})-\frac{h}{2} \right\}-2\delta\\
&=k_{X}(x,z)-\frac{h}{2}-2\delta.
\end{split}
\end{eqnarray*}
Hence, we obtain that
\begin{equation}\label{le-1.10}
\begin{split}
k_{X}(z,z^{\prime})\leq 4\delta+h.
\end{split}
\end{equation}

In addition, by Theorem \ref{qc-2}, it follows that $\alpha_{N(m)}|_{[x,z]}$,
$\alpha_{N(n)}|_{[x,z^{\prime}]}$ and $\alpha_{N(n)}|_{[z^{\prime},f_{mn}(z)]}$ are quasihyperbolic $h$-short arcs.
Hence, we have
\begin{equation}\label{le-1.6}
\begin{split}
k_{X}(x,z)\leq l_{k_{X}}\big(\alpha_{N(m)}|_{[x,z]}\big)\leq k_{X}(x,z)+h,
\end{split}
\end{equation}
and
\begin{equation}\label{le-1.7}
\begin{split}
k_{X}(x,z^{\prime})\leq l_{k_{X}}\big(\alpha_{N(n)}|_{[x,z^{\prime}]}\big)\leq k_{X}(x,z^{\prime})+h.
\end{split}
\end{equation}
Remembering that $l_{k_{X}}\big(\alpha_{N(m)}|_{[x,z]}\big)=l_{k_{X}}\big(\alpha_{N(n)}|_{[x,f_{mn}(z)]}\big)$ implies that
\begin{equation}\label{le-1.11}
\begin{split}
k_{X}(z^{\prime},f_{mn}(z))
&\leq l_{k_{X}}\big(\alpha_{N(n)}|_{[z^{\prime},f_{mn}(z)]}\big)\\
&=\big|l_{k_{X}}\big(\alpha_{N(n)}|_{[x,f_{mn}(z)}\big)-l_{k_{X}}\big(\alpha_{N(n)}|_{[x,z^{\prime}]}\big)\big|\\
&=\big|l_{k_{X}}\big(\alpha_{N(m)}|_{[x,z]}\big)-l_{k_{X}}\big(\alpha_{N(n)}|_{[x,z^{\prime}]}\big)\big|.
\end{split}
\end{equation}
Combing (\ref{le-1.6}), (\ref{le-1.7}) and (\ref{le-1.11}), it follows that
\begin{equation}\label{le-1.12}
\begin{split}
k_{X}(z^{\prime},f_{mn}(z))\leq h.
\end{split}
\end{equation}
According to inequality (\ref{le-1.10}) and (\ref{le-1.12}), we deduce that
$$
k_{X}(f_{mn}(z),z)\leq k_{X}(z,z^{\prime})+k_{X}(z^{\prime},f_{mn}(z))\leq 4\delta+2h.
$$
This proves (3).

Hence, this completes the proof of Lemma \ref{le-1}.
\end{proof}

\vspace{0.1cm}

\medskip\noindent{\bf{3.2. Proof of Theorem \ref{main}.}}
\vspace{0.1cm}

Suppose that $X$ is a Gromov $\delta$-hyperbolic space in the quasihyperbolic metric,
and that $a, b\in\partial_{\infty}X$, $a\neq b$. Let $z\in X$ and $h>0$.
According to the conclusion of Lemma \ref{le-1},
it follows that there are sequences of quasihyperbolic $h$-short arcs $\beta_{i}:z \curvearrowright x_{i}$
and $\gamma_{i}:z \curvearrowright y_{i}$, where $\{x_{i}\}\in a$ and $\{y_{i}\}\in b$,
and the sequences of quasihyperbolic lengths $l_{k_{X}}(\beta_{i})$ and $l_{k_{X}}(\gamma_{i})$ are increasing and tend to $\infty$,
and for $i \leq j$, we let $g_{ij}:\gamma_{i}\rightarrow \gamma_{j}$ and $h_{ij}:\beta_{i}\rightarrow \beta_{j}$
be the length map fixing $z$, and they satisfy
\begin{equation}\label{m-0}
\begin{split}
k_{X}\big(g_{ij}(v),v\big)\leq 4\delta+2h \quad {\text {and}}\quad  k_{X}\big(h_{ij}(t),t\big)\leq 4\delta+2h
\end{split}
\end{equation}
for all $v \in \gamma_{i}$ and $t \in \beta_{i}$.

Since $a, b\in\partial_{\infty}X$ and $a\neq b$, it is clear that $\{x_{i}\}$ and $\{y_{i}\}$ are nonequivalent Gromov sequences.

\vspace{0.1cm}
\noindent {\bf {Claim 3.2.}}\,\, $\big\{(x_{i}|y_{i})_{z}\big\}$ is bounded for all $i$.
\vspace{0.1cm}

We prove this claim by contradiction. Suppose on the contrary that $\big\{(x_{i}|y_{i})_{z}\big\}$ is unbounded. Then there are subsequences $\{x_{i}'\}$ of $\{x_{i}\}$ and $\{y_{i}'\}$ of $\{y_{i}\}$
such that $(x_{i}'|y_{i}')_{z}\rightarrow \infty$ as $i\rightarrow \infty$. By the assumption, we see that $\{x_{i}'\}$ and $\{y_{i}'\}$ are equivalent Gromov sequences.
Since the equivalence of two Gromov sequences is essentially an equivalence relation,
from the transitivity of the equivalence relation and the fact that
a Gromov sequence is clearly equivalent to each of its subsequence,
we obtain that $\{x_{i}\}$ and $\{y_{i}\}$ are equivalent Gromov sequences,
which contradicts the previous conclusion, so the assumption is not valid.
The proof of the Claim 3.2 is now complete.

We are now turning to the proof of Theorem \ref{main}.

For all $i,j$, without loss of generality, according to Claim 3.2, we may assume that
\begin{equation}\label{m-1}
\begin{split}
\big|(x_{i}|y_{i})_{z}-(x_{j}|y_{j})_{z}\big|\leq h
\end{split}
\end{equation}
and
\begin{equation}\label{m-2}
\begin{split}
l_{k_{X}}(\beta_{i+1})\geq l_{k_{X}}(\beta_{i})+3h,\,\,\, l_{k_{X}}(\gamma_{i+1})\geq l_{k_{X}}(\gamma_{i})+3h.
\end{split}
\end{equation}

Since $X$ is the quasihyperbolic metric space and $x_{i}, y_{i}\in X$ for each $i$, by Remark \ref{re-2},
there exists quasihyperbolic $h$-short arc $\alpha_{i}$ connecting $x_{i}$ and $y_{i}$, which satisfies (1).

To prove (2). Let $s_{1}\in\alpha_{1}$, $h>0$.
From the definition of $\alpha_{1}$, we have
$$
k_{X}(s_1,x_1)\leq l_{k_{X}}\big(\alpha_{1}|_{[x_1,s_1]}\big)\leq l_{k_{X}}(\alpha_{1}).
$$
From the above inequality, it follows that
\begin{equation*}
\begin{split}
k_{X}(x_{i},s_{1})&\geq k_{X}(x_{i},z)-k_{X}(s_{1},x_{1})-k_{X}(z,x_{1})\\
&\geq l_{k_{X}}(\beta_{i})-h-l_{k_{X}}(\alpha_{1})-l_{k_{X}}(\beta_{1}).
\end{split}
\end{equation*}
Since $l_{k_{X}}(\beta_{i})\rightarrow\infty\,\, (i\rightarrow\infty$),
we deduce that
$$
k_{X}(x_{i},s_{1})\rightarrow\infty \quad {\text {as}}\quad i\rightarrow\infty.
$$
Similarly,
$$
k_{X}(y_{i},s_{1})\rightarrow\infty \quad {\text {as}}\quad i\rightarrow\infty.
$$
This proves (2).

In what follows, it remains to prove (3) and (4).
Since
\begin{eqnarray*}
\begin{split}
0 &\leq (z|y_{i})_{x_{i}}=\frac{1}{2}\big(k_{X}(z,x_{i})+k_{X}(x_{i},y_{i})-k_{X}(z,y_{i})\big)\\
&\leq \frac{1}{2}\big[k_{X}(z,x_{i})+k_{X}(x_{i},y_{i})-(k_{X}(z,x_{i})-k_{X}(x_{i},y_{i}))\big]\\
&= k_{X}(x_{i},y_{i})\\
&\leq l_{k_{X}}(\alpha_{i}),
\end{split}
\end{eqnarray*}
we can find a point $w_{i}\in\alpha_{i}$ with $\alpha_{i}^{\prime}=\alpha_{i}|_{[x_{i},w_{i}]}$ such that
\begin{eqnarray*}
\begin{split}
l_{k_{X}}(\alpha_{i}^{\prime})=(z|y_{i})_{x_{i}}.
\end{split}
\end{eqnarray*}
Similarly, there exists a point $p_{i}\in\alpha_{i}$ with $\alpha_{i}^{\prime\prime}=\alpha_{i}|_{[p_{i},y_{i}]}$ such that
\begin{eqnarray*}
\begin{split}
l_{k_{X}}(\alpha_{i}^{\prime\prime})=(z|x_{i})_{y_{i}}.
\end{split}
\end{eqnarray*}
Set
$$
\alpha_{i}^{\ast}=\alpha_{i}|_{[w_{i}, p_{i}]}.
$$
Then we have $\alpha_{i}$ is the union of these three successive subarcs, and we say that such subdivision
\begin{equation*}\label{m-3}
\begin{split}
\alpha_{i}=\alpha_{i}^{\prime}\cup \alpha_{i}^{\ast}\cup \alpha_{i}^{\prime\prime}
\end{split}
\end{equation*}
is the subdivision of $\alpha_{i}$ induced by the triangle $\triangle_{i}$,
where $\triangle_{i}=(\beta_{i},\gamma_{i},\alpha_{i})$ is a quasihyperbolic $h$-short triangle for each positive integer $i$.
For the concepts of quasihyperbolic $h$-short triangle, please see \cite{V05, LLYT}.
For the other two sides $\beta_{i},\gamma_{i}$, we do a similar subdivision
\begin{equation*}\label{m-4}
\begin{split}
\beta_{i}=\beta_{i}^{\prime}\cup \beta_{i}^{\ast}\cup \beta_{i}^{\prime\prime}
\quad {\text {and}}\quad
\gamma_{i}=\gamma_{i}^{\prime}\cup \gamma_{i}^{\ast}\cup \gamma_{i}^{\prime\prime},
\end{split}
\end{equation*}
for each $i\in \mathbb{N}_+$, where
\begin{equation}\label{m-5}
\begin{split}
l_{k_{X}}(\beta_{i}^{\prime})=l_{k_{X}}(\gamma_{i}^{\prime})=(x_{i}|y_{i})_{z},
\end{split}
\end{equation}
\begin{equation}\label{m-6}
\begin{split}
l_{k_{X}}(\beta_{i}^{\prime\prime})=l_{k_{X}}(\alpha_{i}^{\prime})=(z|y_{i})_{x_{i}},
\end{split}
\end{equation}
\begin{equation}\label{m-7}
\begin{split}
l_{k_{X}}(\gamma_{i}^{\prime\prime})=l_{k_{X}}(\alpha_{i}^{\prime\prime})=(z|x_{i})_{y_{i}},
\end{split}
\end{equation}
and $z\in \beta_{i}^{\prime}\cap \gamma_{i}^{\prime}$,
$x_{i}\in\beta_{i}^{\prime\prime}$, $\gamma_{i}^{\prime\prime}=\gamma_{i}|_{[q_{i}, y_{i}]}$,
here $q_{i}\in\gamma_{i}^{\ast}\cap\gamma_{i}^{\prime\prime}$.

According to (\ref{m-6}), (\ref{m-7}) and the definition of quasihyperbolic $h$-short arc,
we get
\begin{equation}\label{m-8}
\begin{split}
l_{k_{X}}(\alpha_{i}^{\ast})&=l_{k_{X}}(\alpha_{i})-l_{k_{X}}(\alpha_{i}^{\prime})-l_{k_{X}}(\alpha_{i}^{\prime\prime})\\
&=l_{k_{X}}(\alpha_{i})-(z|y_{i})_{x_{i}}-(z|x_{i})_{y_{i}}\\
&=l_{k_{X}}(\alpha_{i})-k_{X}(x_{i},y_{i})\\
&\leq h.
\end{split}
\end{equation}
Using a similar argument as above inequality, we can obtain
\begin{equation}\label{m-9}
\begin{split}
l_{k_{X}}(\beta_{i}^{\ast})\leq h
\quad {\text {and}}\quad
l_{k_{X}}(\gamma_{i}^{\ast})\leq h.
\end{split}
\end{equation}
From (\ref{m-5}), (\ref{m-6}) and (\ref{m-9}), it follows that
\begin{equation*}\label{m-10}
\begin{split}
l_{k_{X}}(\beta_{i})\geq l_{k_{X}}(\beta_{i}^{\prime})+l_{k_{X}}(\beta_{i}^{\prime\prime})
=(x_{i}|y_{i})_{z}+l_{k_{X}}(\alpha_{i}^{\prime})
\end{split}
\end{equation*}
and
\begin{equation*}\label{m-11}
\begin{split}
l_{k_{X}}(\beta_{i})= l_{k_{X}}(\beta_{i}^{\prime})+ l_{k_{X}}(\beta_{i}^{\ast})+ l_{k_{X}}(\beta_{i}^{\prime\prime})
\leq (x_{i}|y_{i})_{z}+h+l_{k_{X}}(\alpha_{i}^{\prime}).
\end{split}
\end{equation*}
For $i<j$, combing the above estimate with (\ref{m-1}) and (\ref{m-2}), it follows immediately that
\begin{eqnarray*}
\begin{split}
3h&\leq (j-i)\cdot 3h\leq l_{k_{X}}(\beta_{j})-l_{k_{X}}(\beta_{i})\\
&\leq (x_{j}|y_{j})_{z}+h+l_{k_{X}}(\alpha_{j}^{\prime})-(x_{i}|y_{i})_{z}-l_{k_{X}}(\alpha_{i}^{\prime})\\
&\leq |(x_{j}|y_{j})_{z}-(x_{i}|y_{i})_{z}|+h+l_{k_{X}}(\alpha_{j}^{\prime})-l_{k_{X}}(\alpha_{i}^{\prime})\\
&\leq 2h+l_{k_{X}}(\alpha_{j}^{\prime})-l_{k_{X}}(\alpha_{i}^{\prime}).
\end{split}
\end{eqnarray*}
Thus, we have
\begin{eqnarray*}
\begin{split}
l_{k_{X}}(\alpha_{i}^{\prime})\leq l_{k_{X}}(\alpha_{j}^{\prime})-h.
\end{split}
\end{eqnarray*}
Using a similar argument as $\alpha_{i}^{\prime}$, we can obtain
\begin{eqnarray*}
\begin{split}
l_{k_{X}}(\alpha_{i}^{\prime\prime})\leq l_{k_{X}}(\alpha_{j}^{\prime\prime})-h.
\end{split}
\end{eqnarray*}
Therefore, by (\ref{m-8}), it follows from the above fact that
\begin{eqnarray*}
\begin{split}
l_{k_{X}}(\alpha_{i})-l_{k_{X}}(\alpha_{i}^{\prime})&=l_{k_{X}}(\alpha_{i}^{\ast})+l_{k_{X}}(\alpha_{i}^{\prime\prime})
\leq h+l_{k_{X}}(\alpha_{j}^{\prime\prime})-h\\
&=l_{k_{X}}(\alpha_{j}^{\prime\prime})
\leq l_{k_{X}}(\alpha_{j}^{\prime\prime})+l_{k_{X}}(\alpha_{j}^{\ast})\\
&= l_{k_{X}}(\alpha_{j})-l_{k_{X}}(\alpha_{j}^{\prime})\\
&\leq l_{k_{X}}(\alpha_{j})-l_{k_{X}}(\alpha_{i}^{\prime})-h.
\end{split}
\end{eqnarray*}
Since $h>0$, this guarantees that
\begin{eqnarray*}
\begin{split}
l_{k_{X}}(\alpha_{i})\leq l_{k_{X}}(\alpha_{j})-h<l_{k_{X}}(\alpha_{j}).
\end{split}
\end{eqnarray*}
Hence, it follows that there is a well defined orientation preserving length map
$f_{ij}:\alpha_{i}\rightarrow \alpha_{j}$ with $f_{ij}(w_{i})=w_{j}$ for $i\leq j$.
Let $u$ be any point in $\alpha_{i}$ and $i \leq j\leq m$,
it follows from the definition of length map that
\begin{eqnarray*}
\begin{split}
l_{k_{X}}\big(\alpha_{m}|_{[w_{m},f_{im}(u)]}\big)&=l_{k_{X}}\big(\alpha_{i}|_{[w_{i},u]}\big)=l_{k_{X}}\big(\alpha_{j}|_{[w_{j},f_{ij}(u)]}\big)\\
&=l_{k_{X}}\big(\alpha_{m}|_{[w_{m},f_{jm}(f_{ij}(u))]}\big)\\
&=l_{k_{X}}\big(\alpha_{m}|_{[w_{m},(f_{jm}\circ f_{ij})(u)]}\big),
\end{split}
\end{eqnarray*}
which implies that $f_{im}(u)=(f_{jm}\circ f_{ij})(u)$.
Since $u$ is arbitrary, we obtain that  $f_{ii}=id$ and $f_{im}=f_{jm}\circ f_{ij}$ for $i \leq j\leq m$. Thus (3) is proved.

Finally, we only need to show (4). For all $u\in \alpha_i$,
where $\alpha_{i}=\alpha_{i}^{\prime}\cup \alpha_{i}^{\ast}\cup \alpha_{i}^{\prime\prime}$.
Now, according to the position of $u$ in $\alpha_{i}$, we divide the discussions into three cases:

\vspace{0.1cm}
\noindent {\bf {Case 1.}}\,\, $u\in \alpha_{i}^{\prime\prime}.$
\vspace{0.1cm}

Since $l_{k_{X}}(\gamma_{i}^{\prime\prime})=l_{k_{X}}(\alpha_{i}^{\prime\prime})$,
there is a bijective length map
$\varphi_{i}:\alpha_{i}^{\prime\prime}\rightarrow \gamma_{i}^{\prime\prime}$
with $\varphi_{i}(y_{i})=y_{i}$ for all $i$.
Thus, we have
\begin{equation}\label{m-12}
\begin{split}
l_{k_{X}}\big(\alpha_{i}^{\prime\prime}|_{[u, y_{i}]}\big)=l_{k_{X}}\big(\gamma_{i}^{\prime\prime}|_{[\varphi_{i}(u), y_{i}]}\big).
\end{split}
\end{equation}

In addition,
since $\gamma_{i}^{\prime\prime}=\gamma_{i}|_{[q_{i}, y_{i}]}, \alpha_{i}^{\prime\prime}=\alpha_{i}|_{[p_{i}, y_{i}]}$,
where $p_{i}\in\alpha_{i}^{\prime\prime}$ and $q_{i}\in\gamma_{i}^{\ast}\cap\gamma_{i}^{\prime\prime}$,
we obtain that
$$
l_{k_{X}}\big(\gamma_{i}^{\prime\prime}|_{[q_{i}, y_{i}]}\big)=l_{k_{X}}\big(\alpha_{i}^{\prime\prime}|_{[p_{i}, y_{i}]}\big)=l_{k_{X}}\big(\gamma_{i}^{\prime\prime}|_{[\varphi_{i}(p_{i}), y_{i}]}\big),
$$
and then it follows that
$$
\varphi_{i}(p_{i})=q_{i}.
$$

From (\ref{m-12}), it is obvious that
\begin{eqnarray*}
\begin{split}
l_{k_{X}}\big(\alpha_{i}|_{[u, y_{i}]}\big)=l_{k_{X}}\big(\gamma_{i}|_{[\varphi_{i}(u), y_{i}]}\big).
\end{split}
\end{eqnarray*}
In view of the fact
$$
k_{X}(u,y_{i})\leq l_{k_{X}}\big(\alpha_{i}^{\prime\prime}|_{[u, y_{i}]}\big)\leq l_{k_{X}}(\alpha_{i}^{\prime\prime})=(z|x_{i})_{y_{i}}
$$
and the process of proving the conclusion (3) of Lemma \ref{le-1}, we have
\begin{equation}\label{m-13}
\begin{split}
k_{X}\big(\varphi_{i}(u),u\big)\leq 4\delta+2h.
\end{split}
\end{equation}
Since $\varphi_{i}(u)\in \gamma_{i}^{\prime\prime}\subset\gamma_{i}$, it follows from (\ref{m-0}) that
\begin{equation}\label{m-15}
\begin{split}
k_{X}\Big(g_{ij}\big(\varphi_{i}(u)\big), \varphi_{i}(u)\Big)\leq 4\delta+2h.
\end{split}
\end{equation}

Together with (\ref{m-5}) and (\ref{m-12}),
it follows immediately from the definition of length map $g_{ij}$ that
\begin{equation}\label{m-16}
\begin{split}
l_{k_{X}}\big(\gamma_{j}|_{[z, g_{ij}(\varphi_{i}(u))]}\big)&=l_{k_{X}}\big(\gamma_{i}|_{[z, \varphi_{i}(u)]}\big)\\
&=l_{k_{X}}\big(\gamma_{i}^{\prime}\big)+l_{k_{X}}(\gamma_{i}^{\ast})+ l_{k_{X}}\big(\gamma_{i}^{\prime\prime}|_{[q_{i}, \varphi_{i}(u)]}\big) \\
&=(x_{i}|y_{i})_{z}+l_{k_{X}}(\gamma_{i}^{\ast})+l_{k_{X}}\big(\alpha_{i}^{\prime\prime}|_{[p_{i},u]}\big).
\end{split}
\end{equation}

As $g_{ij}(\varphi_{i}(u))\in \gamma_{j}$,
We now also consider two subcases:

\vspace{0.1cm}
\noindent {\bf {Subcase 1.1.}}\,\, $g_{ij}\big(\varphi_{i}(u)\big)\in \gamma_{j}^{\prime\prime}.$
\vspace{0.1cm}

Since $\varphi_{j}:\alpha_{j}^{\prime\prime}\rightarrow \gamma_{j}^{\prime\prime}$ is a bijective length map,
there is $w\in \alpha_{j}^{\prime\prime}$ with $\varphi_{j}(w)=g_{ij}(\varphi_{i}(u))$.
From (\ref{m-13}) and (\ref{m-15}), it is clear that
\begin{equation}\label{m-17}
\begin{split}
k_{X}(u,w)\leq k_{X}\big(u,\varphi_{i}(u)\big)+k_{X}\big(\varphi_{i}(u),g_{ij}(\varphi_{i}(u))\big)+k_{X}\big(g_{ij}(\varphi_{i}(u)),w\big)\leq 3\cdot(4\delta+2h).
\end{split}
\end{equation}
Combing (\ref{m-5}), (\ref{m-12}) and (\ref{m-16}), it follows that
\begin{eqnarray*}
\begin{split}
l_{k_{X}}\big(\alpha_{j}|_{[w_{j},w]}\big)&=l_{k_{X}}(\alpha_{j}^{\ast})+l_{k_{X}}\big(\alpha_{j}^{\prime\prime}|_{[p_{j},w]}\big)\\
&=l_{k_{X}}(\alpha_{j}^{\ast})+l_{k_{X}}\big(\gamma_{j}^{\prime\prime}|_{[\varphi_{j}(p_{j}), \varphi_{j}(w)]}\big)\\
&=l_{k_{X}}(\alpha_{j}^{\ast})+l_{k_{X}}\big(\gamma_{j}^{\prime\prime}|_{[q_{j}, g_{ij}(\varphi_{i}(u))]}\big)\\
&=l_{k_{X}}(\alpha_{j}^{\ast})+l_{k_{X}}\big(\gamma_{j}|_{[z,g_{ij}(\varphi_{i}(u))]}\big)-l_{k_{X}}(\gamma_{j}^{\prime})-l_{k_{X}}(\gamma_{j}^{\ast})\\
&=l_{k_{X}}(\alpha_{j}^{\ast})+(x_{i}|y_{i})_{z}-(x_{j}|y_{j})_{z}+l_{k_{X}}(\gamma_{i}^{\ast})
  -l_{k_{X}}(\gamma_{j}^{\ast})+l_{k_{X}}\big(\alpha_{i}^{\prime\prime}|_{[p_{i},u]}\big),
\end{split}
\end{eqnarray*}
and it follows from the definition of length map $f_{ij}$ that
\begin{eqnarray*}
\begin{split}
l_{k_{X}}\big(\alpha_{j}|_{[w_{j},f_{ij}(u)]}\big)=l_{k_{X}}\big(\alpha_{i}|_{[w_{i},u]}\big)
 =l_{k_{X}}(\alpha_{i}^{\ast})+l_{k_{X}}\big(\alpha_{i}^{\prime\prime}|_{[p_{i},u]}\big).
\end{split}
\end{eqnarray*}
Therefore, according to (\ref{m-1}), (\ref{m-8}) and (\ref{m-9}), it follows immediately that
\begin{equation}\label{m-18}
\begin{split}
k_{X}(w,f_{ij}(u))&\leq l_{k_{X}}\big(\alpha_{j}|_{[w,f_{ij}(u)]}\big)\\
&\leq\big|l_{k_{X}}\big(\alpha_{j}|_{[w_{j},w]}\big)-l_{k_{X}}\big(\alpha_{j}|_{[w_{j},f_{ij}(u)]}\big)\big|\\
&=\big|l_{k_{X}}(\alpha_{j}^{\ast})-l_{k_{X}}(\alpha_{i}^{\ast})+(x_{i}|y_{i})_{z}
 -(x_{j}|y_{j})_{z}+l_{k_{X}}(\gamma_{i}^{\ast})-l_{k_{X}}(\gamma_{j}^{\ast})\big|\\
&\leq l_{k_{X}}(\alpha_{j}^{\ast})+\big|(x_{i}|y_{i})_{z}-(x_{j}|y_{j})_{z}\big|+l_{k_{X}}(\gamma_{i}^{\ast})\\
&\leq 3h.
\end{split}
\end{equation}
Thus, from (\ref{m-17}) and (\ref{m-18}), we deduce that
\begin{eqnarray*}
\begin{split}
k_{X}\big(f_{ij}(u), u\big)\leq k_{X}(u,w)+k_{X}(w,f_{ij}(u))\leq 12\delta+9h.
\end{split}
\end{eqnarray*}

\vspace{0.1cm}
\noindent {\bf {Subcase 1.2.}}\,\, $g_{ij}(\varphi_{i}(u))\notin \gamma_{j}^{\prime\prime}.$
\vspace{0.1cm}

It follows from (\ref{m-5}) and (\ref{m-16}) that
\begin{eqnarray*}
\begin{split}
l_{k_{X}}\big(\gamma_{j}|_{[g_{ij}(\varphi_{i}(u)),q_{j}]}\big)&=l_{k_{X}}\big(\gamma_{j}|_{[z,q_{j}]}\big)-l_{k_{X}}\big(\gamma_{j}|_{[z,g_{ij}(\varphi_{i}(u))]}\big)\\
&=l_{k_{X}}(\gamma_{j}^{\prime})+l_{k_{X}}(\gamma_{j}^{\ast})-l_{k_{X}}\big(\gamma_{j}|_{[z,g_{ij}(\varphi_{i}(u))]}\big)\\
&=(x_{j}|y_{j})_{z}+l_{k_{X}}(\gamma_{j}^{\ast})-(x_{i}|y_{i})_{z}-l_{k_{X}}(\gamma_{i}^{\ast})-l_{k_{X}}\big(\alpha_{i}^{\prime\prime}|_{[p_{i},u]}\big).
\end{split}
\end{eqnarray*}
Combining this equation with (\ref{m-1}) and (\ref{m-9}), we now have
\begin{equation}\label{m-19}
\begin{split}
l_{k_{X}}\big(\gamma_{j}|_{[g_{ij}(\varphi_{i}(u)),q_{j}]}\big)+l_{k_{X}}\big(\alpha_{i}^{\prime\prime}|_{[p_{i},u]}\big)\leq \big|(x_{j}|y_{j})_{z}-(x_{i}|y_{i})_{z}\big|+l_{k_{X}}(\gamma_{j}^{\ast})\leq2h.
\end{split}
\end{equation}
Moreover, according to (\ref{m-8}) and the definition of length map $f_{ij}$, we get
\begin{equation}\label{m-20}
\begin{split}
k_{X}\big(f_{ij}(u),p_{j}\big)&\leq k_{X}\big(w_{j},f_{ij}(u)\big)+k_{X}(w_{j},p_{j})\\
&\leq l_{k_{X}}\big(\alpha_{j}|_{[w_{j},f_{ij}(u)]}\big)+l_{k_{X}}(\alpha_{j}^{\ast})\\
&=l_{k_{X}}\big(\alpha_{i}|_{[w_{i},u]}\big)+l_{k_{X}}(\alpha_{j}^{\ast})\\
&= l_{k_{X}}(\alpha_{i}^{\ast})+l_{k_{X}}\big(\alpha_{i}^{\prime\prime}|_{[p_{i},u]}\big)+l_{k_{X}}(\alpha_{j}^{\ast})\\
&\leq l_{k_{X}}\big(\alpha_{i}^{\prime\prime}|_{[p_{i},u]}\big)+2h.
\end{split}
\end{equation}
In addition, remembering that $p_{j}\in\alpha_{j}^{\prime\prime}$, $\varphi_{j}(p_{j})=q_{j}$, (\ref{m-13}) and (\ref{m-15}) implies that
$$
k_{X}(p_{j},q_{j})\leq 4\delta+2h
$$
and
\begin{eqnarray*}
\begin{split}
k_{X}\big(q_{j},\varphi_{i}(u)\big)&\leq k_{X}\big(q_{j},g_{ij}(\varphi_{i}(u))\big)+k_{X}\big(g_{ij}(\varphi_{i}(u)),\varphi_{i}(u)\big)\\
&\leq l_{k_{X}}\big(\gamma_{j}|_{[g_{ij}(\varphi_{i}(u)),q_{j}]}\big)+4\delta+2h.
\end{split}
\end{eqnarray*}
Therefore, according to (\ref{m-13}), (\ref{m-19}) and (\ref{m-20}), it follows from the above fact that
\begin{eqnarray*}
\begin{split}
k_{X}\big(f_{ij}(u),u\big)&\leq k_{X}\big(f_{ij}(u),p_{j}\big)+k_{X}(p_{j},q_{j})+k_{X}\big(q_{j},\varphi_{i}(u)\big)+k_{X}(\varphi_{i}(u),u)\\
&\leq \big(l_{k_{X}}\big(\alpha_{i}^{\prime\prime}|_{[p_{i},u]}\big)+2h\big)+(4\delta+2h)
 +\big(l_{k_{X}}\big(\gamma_{j}|_{[g_{ij}(\varphi_{i}(u)),q_{j}]}\big)+4\delta+2h\big)+(4\delta+2h)\\
&= l_{k_{X}}\big(\alpha_{i}^{\prime\prime}|_{[p_{i},u]}\big)+l_{k_{X}}\big(\gamma_{j}|_{[g_{ij}(\varphi_{i}(u)),q_{j}]}\big)+12\delta+8h\\
&\leq 12\delta+10h.
\end{split}
\end{eqnarray*}

\vspace{0.1cm}
\noindent {\bf {Case 2.}}\,\, $u\in \alpha_{i}^{\prime}.$
\vspace{0.1cm}

Since $l_{k_{X}}(\alpha_{i}^{\prime})=l_{k_{X}}(\beta_{i}^{\prime\prime})$,
there is a bijective length map $\psi_{i}:\alpha_{i}^{\prime}\rightarrow \beta_{i}^{\prime\prime}$ with $\psi_{i}(x_{i})=x_{i}$ for all $i$.
The proof of this case is similar to Case 1.
The major change is the substitution of $h_{ij}$ and $\psi_{i}$ for $g_{ij}$ and $\varphi_{i}$, respectively.
Hence, for all $u\in \alpha_{i}^{\prime}$, we can obtain
$$
k_{X}\big(f_{ij}(u),u\big)\leq 12\delta+10h.
$$

\noindent {\bf {Case 3.}}\,\, $u\in \alpha_{i}^{\ast} .$

As $w_{i}\in \alpha_{i}^{\prime}$ , according to Case 2, we obtain that
\begin{equation}\label{m-21}
\begin{split}
k_{X}(w_{j},w_{i})=k_{X}\big(f_{ij}(w_{i}),w_{i}\big)\leq 12\delta+10h.
\end{split}
\end{equation}
For all $u\in \alpha_{i}^{\ast}$, by the definition of length map $f_{ij}$, (\ref{m-8}) and (\ref{m-21}),
it follows immediately that
\begin{equation*}\label{m-22}
\begin{split}
k_{X}\big(f_{ij}(u),u\big)
&\leq k_{X}\big(f_{ij}(u),w_{j}\big)+k_{X}(w_{j},w_{i})+k_{X}(w_{i},u)\\
&\leq l_{k_{X}}\big(\alpha_{j}|_{[f_{ij}(u), w_{j}]}\big)+12\delta+10h+l_{k_{X}}(\alpha_{i}^{\ast})\\
&=l_{k_{X}}\big(\alpha_{i}|_{[u, w_{i}]}\big)+12\delta+10h+l_{k_{X}}(\alpha_{i}^{\ast})\\
&\leq 2\cdot l_{k_{X}}(\alpha_{i}^{\ast})+12\delta+10h\\
&\leq 12\delta+12h,
\end{split}
\end{equation*}
as desired.

Combining the above three cases, it can be seen that $f_{ij}$ satisfies
$$
k_{X}\big(f_{ij}(u), u\big)\leq 12(\delta+h)
$$
for all $u\in \alpha_{i}$. This yields (4).

Hence, Theorem \ref{main} is proved.
$\hfill\Box$

\vspace{0.1cm}

\bigskip \noindent{\bf Acknowledgement}.
We would like to express our deep gratitude to the referee for his
or her careful reading and useful suggestions and remarks on this paper.

\bibliographystyle{amsplain}

\end{document}